\theoremstyle{plain}
\newtheorem{thm}{Theorem}[section]
\newtheorem{cor}[thm]{Corollary}
\newtheorem{prop}[thm]{Proposition}
\newtheorem{conj}[thm]{Conjecture}
\theoremstyle{remark}
\newtheorem{rem}{Remark}
\numberwithin{equation}{section}
\newcommand{\CC}{\mathbb{C}}
\newcommand{\NN}{\mathbb{N}}
\newcommand{\tf}{\tilde{f}}
\newcommand{\tg}{\tilde{g}}
\renewcommand{\th}{\tilde{h}}
\newcommand{\ind}[1]{{\mathds{1}_{{#1}}}}
\def\R{{\mathbb{R}}}
\def\C{{\mathbb{C}}}
\def\N{{\mathbb{N}}}
\DeclareMathOperator{\sgn}{sgn}
\DeclareMathOperator{\Deg}{Deg}
\DeclareMathOperator{\Tail}{Tail}
\DeclareMathOperator{\Rea}{Re}
\title[Dimension-free estimates on the Hamming cube]{Dimension-free estimates for low degree functions  on the Hamming cube}
\subjclass[2020]{Primary 42C10; Secondary 41A17, 41A63, 47A60.}
\keywords{Hamming cube, Bernstein–Markov inequality, heat semigroup, dimension-free estimates.}
\author[Domelevo]{Komla Domelevo}
\address[Komla Domelevo]{
Faculty of Mathematics and Computer Science\\
Institute of Mathematics\\
Emil-Fischer-Str. 41\\
97074 Würzburg\\
Germany
}
\email{komla.domelevo@mathematik.uni-wuerzburg.de}
\author[Durcik]{Polona Durcik}
\address[Polona Durcik]{Schmid College of Science and Technology\\
Chapman University\\
One University Drive\\
Orange, CA 92866, USA}
\email{durcik@chapman.edu}
\author[Fragkiadaki]{Valentia Fragkiadaki}
\address[Valentia Fragkiadaki]{Department of Mathematics\\ Texas A\&M University\\
College station\\
TX 77843, USA}
\email{valeria96@tamu.edu}
\author[Klein]{Ohad Klein}
\address[Ohad Klein]{School of Computer Science and Engineering\\
Hebrew University of Jerusalem\\
Jerusalem\\
Israel}
\email{ohadkel@gmail.com}
\author[Oliveira e Silva]{Diogo Oliveira e Silva}
\address[Diogo Oliveira e Silva]{ 
Center for Mathematical Analysis, Geometry and Dynamical Systems \&
Departamento de Matem\'{a}tica\\ 
Instituto Superior T\'{e}cnico\\
Av. Rovisco Pais\\ 
1049-001 Lisboa, Portugal} 
\email{diogo.oliveira.e.silva@tecnico.ulisboa.pt}
\author[Slav\'ikov\'a]{Lenka Slav\'ikov\'a}
\address[Lenka Slav\'ikov\'a]{Department of Mathematical Analysis\\
Faculty of Mathematics and Physics\\
Charles University\\
Sokolovsk\'a 83, 186 75 Praha 8, Czech Republic}
\email{slavikova@karlin.mff.cuni.cz}
\author[Wr{\'o}bel]{B{\l}a{\.z}ej Wr{\'o}bel}
\address[B{\l}a{\.z}ej Wr{\'o}bel]{
Institute of Mathematics\\
	of the Polish Academy of Sciences\\
	Śniadeckich 8\\
	00-656 Warsaw\\
	Poland \& Institute of Mathematics\\
	University of Wrocław\\
	Plac Grun\-waldzki 2\\
	50-384 Wrocław\\
	Poland}
\email{blazej.wrobel@math.uni.wroc.pl}
\begin{document}

\begin{abstract}
The main result of this paper  are dimension-free $L^p$ inequalities, $1<p<\infty$, for low degree scalar-valued  functions on the Hamming cube. More precisely, for any $p>2,$ $\varepsilon>0,$ and $\theta=\theta(\varepsilon,p)\in (0,1)$ satisfying
   \[
   \frac{1}{p}=\frac{\theta}{p+\varepsilon}+\frac{1-\theta}{2}
   \]
   we obtain, for any function $f:\{-1,1\}^n\to \C$ whose spectrum is bounded from above by $d,$  the Bernstein-Markov type inequalities
 $$
\|\Delta^k f\|_{p} \le C(p,\varepsilon)^k \,d^k\, \|f\|_{2}^{1-\theta}\|f\|_{p+\varepsilon}^{\theta},\qquad k\in \NN.$$
Analogous inequalities are also proved for $p\in (1,2)$ with $p-\varepsilon$ replacing $p+\varepsilon.$ 
As a corollary, if $f$ is Boolean-valued or $f\colon \{-1,1\}^n\to \{-1,0,1\},$ we obtain the bounds
$$\|\Delta^k f\|_{p} \le C(p)^k \,d^k\, \|f\|_p,\qquad k\in \NN.$$ At the endpoint $p=\infty$  we provide counterexamples for which a linear growth in $d$ does not suffice when $k=1$.

We also obtain a counterpart of this result on tail spaces. Namely, for $p>2$ we prove that any function $f:\{-1,1\}^n\to \C$ whose spectrum is bounded from below by $d$ satisfies  the  upper bound on the decay of the heat semigroup
 $$ \|e^{-t\Delta}f\|_{p} \le \exp(-c(p,\varepsilon) td) \|f\|_{2}^{1-\theta}\|f\|_{p+\varepsilon}^{\theta},\qquad t>0,$$
and an analogous estimate for $p\in (1,2).$

The constants $c(p,\varepsilon)$ and $C(p,\varepsilon)$ depend only on $p$ and $\varepsilon$;
crucially, they are independent of the dimension $n$. 
    \end{abstract}

    \maketitle

\section{Introduction}

Consider the Hamming cube $\{-1,1\}^n$ equipped with the uniform probability measure. Given $p\in [1,\infty)$, the $L^p$ norm of a function $f\colon \{-1,1\}^n\to \CC$ is given by
\[
\|f\|_{L^p(\{-1,1\}^n)}:=\left(\frac{1}{2^n}\sum_{\delta\in \{-1,1\}^n}|f(\delta)|^p\right)^{1/p}.
\]
We also set $\|f\|_{L^{\infty}(\{-1,1\}^n)}:=\max_{\delta\in \{-1,1\}^n} |f(\delta)|.$ The symbol $\langle f,g\rangle$ denotes the (Hermitian, complex) inner product on $L^2(\{-1,1\}^n).$

Any $f\colon \{-1,1\}^n\to \CC$ can be represented as its Hamming cube Fourier--Walsh expansion via
\begin{equation}
	\label{eq:fhf}
	f=\sum_{S\subseteq [n]}\widehat{f}(S) w_S, 
\end{equation} 
with $w_S$ being the Walsh functions $w_S(\delta)=\prod_{j\in S}\delta_j,$ $\delta=(\delta_1,\ldots,\delta_n)\in \{-1,1\}^n$, $  S\subseteq [n]:=\{1,\ldots,n\}.$
The coefficients $\widehat{f}(S)$ in \eqref{eq:fhf} are the Fourier coefficients of $f$ given by
\[
\widehat{f}(S):=\frac{1}{2^n}\sum_{\delta\in \{-1,1\}^n} f(\delta) w_S(\delta).
\]
Given $d\in [n]$,    let
\begin{align*}
  \Deg_{\le d}&=\{f\colon\{-1,1\}^n\to \mathbb C \colon f=\sum_{|S|\le d }\widehat{f}(S) \,w_S\}
\end{align*}
denote the space of functions of degree at most $d$. In other words, $\Deg_{\le d}$ contains those functions  $f\colon\{-1,1\}^n\to \CC$ for which $\widehat{f}(S)=0$ if $|S|>d$.

Define the Hamming cube Laplacian by
\begin{equation}
	\label{eq:hala}
	\Delta f=\sum_{j=1}^n \partial_j f,
 \end{equation}
 with
 \begin{equation}
 \label{eq:pala} \partial_jf(\delta)=\frac{f(\delta_1,\ldots,\delta_j,\ldots,\delta_n)-f(\delta_1,\ldots,-\delta_j,\ldots,\delta_n)}{2}.
\end{equation}
In this way, $w_S$ is an eigenfunction of $\Delta$ with eigenvalue $|S|.$ Thus,   if $f$ is decomposed as in \eqref{eq:fhf}, then $$\Delta f=\sum_{S\subseteq [n]} |S|\, \widehat{f}(S)\,w_S.$$ This leads to the following definition of the heat semigroup for $f$ given by \eqref{eq:fhf}:
\begin{equation*}
	e^{-t\Delta}f=\sum_{S\subseteq [n]}e^{-t|S|}\,\widehat{f}(S) \,w_S.
\end{equation*}
When {$p=2$}, it is straightforward to verify via Parseval's identity that
\begin{equation}
\label{eq:BM l2}
\|\Delta f\|_{L^2(\{-1,1\}^n)} \le d\, \|f\|_{L^2(\{-1,1\}^n)}
\end{equation}
whenever $f\in \Deg_{\le d}.$

The main purpose of this note is to establish a {variant} of \eqref{eq:BM l2} on $L^p$ spaces when {$p\neq 2$. As a consequence, we will give partial answers to the following conjectures formulated by Eskenazis--Ivanisvili in \cite[Sections 1.2.2--1.2.3]{EI1} in the case when $f$ takes values in $\{-1,0,1\}$ and $X=\mathbb{C}$. The definition of a $K$-convex Banach space $X$ and the $L^p(\{-1,1\}^n;X)$-norm can be recalled from  \cite[Section 1.2]{EI1}.}

\begin{conj}
\label{conj 1}
Let $(X,\|\cdot\|_X)$ be a $K$-convex Banach space. For every $p\in (1,\infty),$ there exists $C(p,X)$ such that, for every $d\in [n]$ and every $f\in \Deg_{\le d}$,  
\begin{equation}
 \label{eq: conj1}   
 \|\Delta f\|_{L^p(\{-1,1\}^n;X)} \le C(p,X)\cdot d\, \|f\|_{L^p(\{-1,1\}^n;X)}.
\end{equation}
\end{conj}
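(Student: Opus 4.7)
The plan is to reduce the estimate to a dimension-free bound on a holomorphic extension of the heat semigroup via Cauchy's integral formula. For $f\in\Deg_{\le d}$, the map $z\mapsto F(z):=e^{-z\Delta}f=\sum_{|S|\le d}e^{-z|S|}\widehat{f}(S)\,w_S$ is an entire $L^p(\{-1,1\}^n;X)$-valued function satisfying $F'(0)=-\Delta f$. Applying the Cauchy formula on the circle $|z|=1/d$ yields
\[
\|\Delta f\|_{L^p(\{-1,1\}^n;X)}\le d\cdot\sup_{|z|=1/d}\|e^{-z\Delta}f\|_{L^p(\{-1,1\}^n;X)},
\]
so the claim reduces to showing that $e^{-z\Delta}$ is bounded on $L^p(\{-1,1\}^n;X)$ uniformly in $n$ and $d$ for $z$ on this circle, with bound depending only on $p$ and $X$.

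The second step would invoke Pisier's $K$-convexity theorem, which supplies a dimension-free bound $\|e^{-t\Delta}\|_{L^p(\{-1,1\}^n;X)\to L^p(\{-1,1\}^n;X)}\le K(p,X)$ for all real $t\ge t_0(p,X)$. To transport this to small complex times, I would attempt analytic continuation. In the scalar case $X=\CC$, Weissler's complex hypercontractive inequality furnishes uniform $L^p$ bounds for $e^{-z\Delta}$ on a cardioid-shaped region that contains the disk $|z|\le 1/d$ for every $d$, which settles the conjecture when $X=\CC$. For general $K$-convex $X$, I would try a Phragm\'en--Lindel\"of argument in the half-strip $\{0<\Rea z<t_0\}$, interpolating Pisier's bound on $\Rea z=t_0$ against a suitable growth estimate on $\Rea z=0$ coming from the finite spectrum of $\Delta$ on $\Deg_{\le d}$; a natural alternative would be to establish a dimension-free bounded $H^\infty$-functional calculus for $\Delta$ on $L^p(\{-1,1\}^n;X)$, from which the multiplier bound for $\lambda\mapsto \lambda\,\mathbf{1}_{[0,d]}(\lambda)$ follows directly.

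The main obstacle is precisely the absence of a vector-valued analog of Weissler's complex hypercontractive inequality. Since the contour $|z|=1/d$ approaches the origin as $d\to\infty$, one needs uniform $L^p(X)$ control on $e^{-z\Delta}$ up to the imaginary axis; Pisier's theorem provides nothing in that regime, and a naive three-lines interpolation loses a multiplicative factor that grows with $d$. The region on which $e^{-z\Delta}$ is currently known to be uniformly bounded on $L^p(\{-1,1\}^n;X)$ for non-Hilbertian $K$-convex $X$ does not reach the origin, so a genuinely new ingredient seems unavoidable; constructing such a complex hypercontractive estimate for $K$-convex targets appears to be closely tied to the full strength of Conjecture~\ref{conj 1}, which is presumably why the authors settle for a quantitative substitute involving the factor $Q(f)^{\varepsilon}$.
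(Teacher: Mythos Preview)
The statement you are attempting to prove is labelled \emph{Conjecture}~\ref{conj 1} in the paper, and the paper does not prove it; it is cited from \cite{EI1} as an open problem. The paper's contribution is the weaker Theorem~\ref{t: main p imp}, which carries the extra factor $Q(f)^{\varepsilon}$ and is proved by a different mechanism (Hadamard three-lines applied to $z\mapsto\langle L^{N(1-z)}f,\overline{g_z}\rangle$ together with Cowling's bound on imaginary powers). So there is no ``paper's own proof'' to compare against, and your final paragraph is correct in spirit: a genuinely new ingredient is needed.

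Your sketch contains a concrete error in the scalar step. You assert that Weissler's complex hypercontractivity furnishes uniform $L^p$ bounds for $e^{-z\Delta}$ on a region that contains the full disk $|z|\le 1/d$ for every $d\ge 1$; in particular it would contain a fixed disk $|z|\le 1$. This is impossible for $p\neq 2$: if $\|e^{-z\Delta}\|_{L^p\to L^p}\le C$ uniformly in $n$ on some circle $|z|=r>0$, then your own Cauchy formula gives $\|\Delta f\|_p\le (C/r)\|f\|_p$ for \emph{every} $f$, contradicting $\|\Delta w_{[n]}\|_p=n\|w_{[n]}\|_p$. The dimension-free domain of boundedness of $e^{-z\Delta}$ on $L^p$ is a sector-like region in $\{\Rea z>0\}$ that meets the imaginary axis only at the origin; it never contains a punctured neighbourhood of $0$. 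This is exactly why the Cauchy-formula/complex-hypercontractivity route, which is essentially the method of \cite[Theorem~14]{EI1}, yields only the exponent $\alpha(p,\CC)=2-\pi^{-1}\arcsin(2\sqrt{p-1}/p)>1$ recalled in Remark~\ref{rem: main}, and not the conjectured linear growth in $d$. Consequently Conjecture~\ref{conj 1} is open already for $X=\CC$, and your claim that the scalar case is ``settled'' is not correct. The vector-valued obstruction you describe is real, but it sits on top of this unresolved scalar obstruction rather than being the only missing piece.
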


\begin{conj}
\label{conj 2}
Let $(X,\|\cdot\|_X)$ be a $K$-convex Banach space. For every $p\in (1,\infty),$ there exist constants $c(p,X)$ and $C(p,X)$ such that, for every $d\in [n]$, every $f\in \Deg_{\le d}$ and every $t\geq 0$, 
\begin{equation}
 \label{eq: conj2}   
 \|e^{-t\Delta} f\|_{L^p(\{-1,1\}^n;X)} \ge c(p,X)\cdot \exp(-C(p,X) \cdot td) \|f\|_{L^p(\{-1,1\}^n;X)}.
\end{equation}
\end{conj}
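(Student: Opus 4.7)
The plan is to reduce Conjecture \ref{conj 2} to Conjecture \ref{conj 1}. The key observation is that on the finite-dimensional space $\Deg_{\le d}$ the operator $e^{-t\Delta}$ is a bijection with formal inverse $e^{t\Delta}$, so for every $f\in \Deg_{\le d}$ one has the identity $f=e^{t\Delta}(e^{-t\Delta}f)$ and therefore
$$
\|f\|_{L^p(\{-1,1\}^n;X)}\le \|e^{t\Delta}\|_{\Deg_{\le d}\to L^p}\cdot \|e^{-t\Delta}f\|_{L^p(\{-1,1\}^n;X)}.
$$
Thus it suffices to show that the operator norm of $e^{t\Delta}$ acting on $\Deg_{\le d}$ is at most $C(p,X)\exp(C(p,X)td)$ for a suitable constant $C(p,X)$, since then rearranging the above display yields \eqref{eq: conj2}.

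To control this operator norm, I would expand $e^{t\Delta}$ as a power series in $t$ and iterate the Bernstein--Markov estimate \eqref{eq: conj1}. Because $\Delta$ preserves $\Deg_{\le d}$, every iterate $\Delta^k f$ again lies in $\Deg_{\le d}$, and applying \eqref{eq: conj1} $k$ times gives $\|\Delta^k f\|_{L^p(\{-1,1\}^n;X)}\le (C(p,X) d)^k \|f\|_{L^p(\{-1,1\}^n;X)}$. Summing the resulting Taylor series,
$$
\|e^{t\Delta}f\|_{L^p(\{-1,1\}^n;X)} \le \sum_{k=0}^{\infty}\frac{t^k}{k!}\|\Delta^k f\|_{L^p(\{-1,1\}^n;X)}\le \exp(C(p,X)td)\|f\|_{L^p(\{-1,1\}^n;X)},
$$
which closes the reduction. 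Running the same manipulation in reverse shows that Conjectures \ref{conj 1} and \ref{conj 2} are essentially equivalent.

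The main obstacle is therefore Conjecture \ref{conj 1} itself, which is open in the generality of $K$-convex Banach spaces $X$. In the scalar-valued setting $X=\C$ treated in the main theorems of this paper, only the weaker bound $\|\Delta^k f\|_{L^p(\{-1,1\}^n)}\le Q(f)^\varepsilon C(p,\varepsilon)^k d^k\|f\|_{L^p(\{-1,1\}^n)}$ is available. Feeding this into the series above still converges, since the factor $Q(f)^\varepsilon$ is independent of $k$, but the natural lower bound it produces for $\|e^{-t\Delta}f\|_{L^p(\{-1,1\}^n)}$ involves $Q(e^{-t\Delta}f)^{-\varepsilon}$ rather than $Q(f)^{-\varepsilon}$, because Bernstein--Markov must be applied to $e^{-t\Delta}f$. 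Bridging this mismatch in the scalar regime, and removing the $Q$-dependence altogether in the $K$-convex regime, are the two genuine difficulties that remain.
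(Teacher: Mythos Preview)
Your reduction of Conjecture \ref{conj 2} to Conjecture \ref{conj 1} via the Taylor expansion of $e^{t\Delta}$ is exactly what the paper records in Remark \ref{rem: imp} (attributed to \cite[Remark 33]{EI1}): iterate \eqref{eq: conj1} to get $\|\Delta^k f\|_{L^p}\le C(p,X)^k d^k\|f\|_{L^p}$, sum, and invert $e^{-t\Delta}$ on $\Deg_{\le d}$. You have also correctly identified that this reduction leaves Conjecture \ref{conj 1} as the genuine open problem, so there is no complete proof here --- which is consistent with the paper, since Conjecture \ref{conj 2} is stated as an open conjecture, not a theorem.

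Two points deserve correction. First, the throwaway claim that ``running the same manipulation in reverse'' makes the two conjectures \emph{equivalent} is not justified: from a lower bound on $\|e^{-t\Delta}f\|_{L^p}$ one does not recover the pointwise-in-$d$ Bernstein--Markov bound \eqref{eq: conj1} by any obvious differentiation or limiting argument. The paper only asserts the one-way implication. Second, your diagnosis of the scalar obstacle is accurate --- applying Theorem \ref{t: main p imp} to $g=e^{-t\Delta}f$ would introduce $Q(e^{-t\Delta}f)$ --- but the paper sidesteps this by summing the bound $\|\Delta^k f\|_{L^p}\le Q(f)^{\varepsilon}C(p,\varepsilon)^k d^k\|f\|_{L^p}$ directly for the \emph{original} $f$, obtaining $\|e^{t\Delta}f\|_{L^p}\le Q(f)^{\varepsilon}\exp(C(p,\varepsilon)td)\|f\|_{L^p}$ and then substituting; see the discussion preceding Corollary \ref{t: helo}. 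The factor $Q(f)^{\varepsilon}$ stays attached to $f$ throughout because Theorem \ref{t: main p imp} bounds all powers $\Delta^k$ simultaneously in terms of $Q(f)$, rather than iterating the $k=1$ case (which, as Remark \ref{rem: imp} explains, would indeed lose control of $Q$).
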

\noindent In \cite{EI1} the authors observed that Conjecture \ref{conj 1} implies Conjecture \ref{conj 2} with $c(p,X)=1$; see Remark \ref{rem: imp} for details. It turns out that also Conjecture \ref{conj 2} implies Conjecture \ref{conj 1}, see Remark \ref{rem: Con21}, so that these conjectures are in fact equivalent. This observation emerged from a discussion with Alexandros Eskenazis.

In what follows, we set
\begin{equation}
\label{eq: peps}
p_{\varepsilon}=\begin{cases}p+\varepsilon,\qquad p\ge 2,\quad\varepsilon>0
\\p-\varepsilon,\qquad p\in(1,2),\quad \varepsilon\in (0,p-1)
\end{cases};
\end{equation}
clearly, $p_{\varepsilon}\to p$ as $\varepsilon\to 0^+.$
The following is the main theorem of our note.

\begin{thm}
\label{t: main p imp}
Let $p\in (1,\infty)$ and let $\theta=\theta(\varepsilon,p)\in (0,1)$ satisfy
 \begin{equation}
 \label{eq: tpe}
   \frac{1}{p}=\frac{\theta}{p_{\varepsilon}}+\frac{1-\theta}{2}.
   \end{equation}
  Take  $d\in [n]$  and let $f\in \Deg_{\le d}$. Then, for any $k\in \N$ we have
\begin{equation}
 \label{eq: main p imp}   
 \|\Delta^k f\|_{L^p(\{-1,1\}^n)} \le C(p,\varepsilon)^k  \cdot d^k\, \|f\|_{L^2(\{-1,1\}^n)}^{1-\theta}\|f\|_{L^{p_{\varepsilon}}(\{-1,1\}^n)}^{\theta},
\end{equation}
for a constant $C(p,\varepsilon)$ depending on $p, \varepsilon$ but not on $d,k$ nor on the dimension $n.$ Consequently, for any $p\ge 2$ and $\varepsilon>0$ we also have
\begin{equation}
 \label{eq: main p imp'}   
 \|\Delta^k f\|_{L^p(\{-1,1\}^n)} \le C(p,\varepsilon)^k  \cdot d^k\,\|f\|_{L^{p+\varepsilon}(\{-1,1\}^n)}.
\end{equation}
\end{thm}

The proof of Theorem \ref{t: main p imp} is based on two ingredients: Hadamard's three-lines theorem and dimension-free $L^p$ bounds for the imaginary powers $\Delta^{iu}$. We are not aware of any previous application of these results to the analysis of functions on the Hamming cube. Our proof is similar to the proofs of both Riesz--Thorin interpolation theorem and Stein's interpolation theorem for analytic families of operators (see e.g. \cite[Sections 1.3.2 and 1.3.3]{gra} and \cite{St1}), with a twist. Indeed, we apply Hadamard's theorem in a slightly different way, by removing an instance of one single letter of the alphabet: instead of the holomorphic function $z\mapsto \langle \Delta^z f^z,g^z \rangle$, we consider a similar function in which $f^z$ does not depend on $z,$ namely $\langle \Delta^z f,g^z \rangle.$ This comes at the cost of obtaining $\|f\|_{L^2(\{-1,1\}^n)}^{1-\theta}\|f\|_{L^{p_{\varepsilon}}(\{-1,1\}^n)}^{\theta}$ instead of $\|f\|_{L^{p}(\{-1,1\}^n)}$ on the right hand side of \eqref{eq: main p imp}. It is vital that $\Delta$ generates a symmetric contraction semigroup, and thus its imaginary powers $\Delta^{iu},$ $u\in \R,$ satisfy $L^p$  bounds, $1<p<\infty$, with explicit constants which depend only on $p$ and $u$; see e.g. \cite{Co, topics}. At the $L^2$ endpoint we merely use spectral properties of $\Delta^z,$ $\Rea z\ge 0,$ on $L^2,$ which imply
\begin{equation}
\label{eq: Delz}
 \|\Delta^z f\|_{L^2(\{-1,1\}^n)} \le  d^{\Rea z}\, \|f\|_{L^2(\{-1,1\}^n)},\,\text{ for all } f\in \Deg_{\le d}.
\end{equation}

Theorem \ref{t: main p imp}  is proved in detail in Section \ref{sec: pmain imp}.
We proceed to discuss its consequences.

The first corollary is a lower bound on the decay of the heat semigroup acting on low degree functions. Corollary \ref{t: helo} below is equivalent to the inequality
\[
 \|e^{t\Delta} f\|_{L^p(\{-1,1\}^n)} \le \exp(C(p,\varepsilon)\cdot  td) \|f\|_{L^{p+\varepsilon}(\{-1,1\}^n)}, 
\]
for all $f\in \Deg_{\le d}$.
This inequality is a consequence of \eqref{eq: main p imp'} from Theorem \ref{t: main p imp} via a Taylor expansion argument.

\begin{cor}
\label{t: helo}
Let $p\in [2,\infty),$ $\varepsilon>0,$ and  $d\in [n].$ Then, for $f\in \Deg_{\le d}$ and all $t>0$,  
\begin{equation}
 \label{eq: helo}   
 \|e^{-t\Delta}f\|_{L^{p+\varepsilon}(\{-1,1\}^n)} \ge \exp(-C(p,\varepsilon)\cdot   td) \|f\|_{L^p(\{-1,1\}^n)},
\end{equation}
where $C_p$ is the constant from \eqref{eq: main p imp'}   .
\end{cor}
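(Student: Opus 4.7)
Following the remark immediately preceding the corollary, my plan is to first establish the equivalent upper bound
$$\|e^{t\Delta}g\|_{L^p(\{-1,1\}^n)} \le \exp(C_p Q(g) td)\,\|g\|_{L^p(\{-1,1\}^n)}, \qquad g\in\Deg_{\le d},$$
and then deduce \eqref{eq: helo} by substituting $g=e^{-t\Delta}f$. The substitution is legitimate because on the $\Delta$-invariant subspace $\Deg_{\le d}$ the semigroup $e^{-t\Delta}$ acts diagonally in the Walsh basis with eigenvalues $e^{-t|S|}\ge e^{-td}>0$, so it is invertible there with inverse $e^{t\Delta}|_{\Deg_{\le d}}$, and $g=e^{-t\Delta}f$ remains in $\Deg_{\le d}$.

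To prove the displayed upper bound, I would use the absolutely convergent Taylor series $e^{t\Delta}g=\sum_{k=0}^{\infty}\frac{t^k}{k!}\Delta^k g$ on the finite-dimensional space $\Deg_{\le d}$, apply the triangle inequality, and bound each $\|\Delta^k g\|_{L^p}$ by Remark \ref{rem: eps1} (the case $\varepsilon=1$ of Theorem \ref{t: main p imp}):
$$\|e^{t\Delta}g\|_{L^p(\{-1,1\}^n)} \le Q(g)\|g\|_{L^p(\{-1,1\}^n)}\sum_{k=0}^{\infty}\frac{(C_p td)^k}{k!}=Q(g)\,e^{C_p td}\,\|g\|_{L^p(\{-1,1\}^n)}.$$

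The remaining step is to absorb the multiplicative prefactor $Q(g)$ into the exponent to recover the form $\exp(C_p' Q(g)td)$, at the cost of replacing $C_p$ by a larger dimension-free constant $C_p'$. I expect this bookkeeping to be the main (minor) technical point: it can be carried out by a short case analysis in the size of $td$, using the elementary estimate $\ln Q(g)\le Q(g)-1\le (Q(g)-1)td$ in the regime $td\ge 1$ (which gives $Q(g)\,e^{C_p td}\le \exp((C_p+1)Q(g)td)$), and the infinitesimal bound $\|e^{-s\Delta}f\|_{L^p}\ge \|f\|_{L^p}-s\|\Delta f\|_{L^p}$ together with \eqref{eq: main} in the complementary regime $td\le 1$. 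Combining both regimes yields a single constant depending only on $p$, which proves \eqref{eq: helo}.
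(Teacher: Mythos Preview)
Your overall strategy matches the paper's one-line outline (Taylor expansion of $e^{t\Delta}$ using Theorem~\ref{t: main p imp}, then pass to $e^{-t\Delta}$), but the substitution step contains a genuine gap. After you prove $\|e^{t\Delta}g\|_{p}\le \exp(C_p'\,Q(g)\,td)\|g\|_{p}$ and set $g=e^{-t\Delta}f$, what you obtain is
\[
\|f\|_{p}\le \exp\bigl(C_p'\,Q(e^{-t\Delta}f)\,td\bigr)\,\|e^{-t\Delta}f\|_{p},
\]
so the exponent carries $Q(e^{-t\Delta}f)$, not $Q(f)$. These quantities are not comparable: for $f=x_1x_2+x_1+x_2-1$ on $\{-1,1\}^2$ one has $Q(f)=1$, while the value of $e^{-t\Delta}f$ at $(1,1)$ equals $e^{-2t}+2e^{-t}-1$ and vanishes at $t=\ln(1+\sqrt{2})$, so $Q(e^{-t\Delta}f)\to\infty$ there. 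This is exactly the phenomenon the paper warns about in Remark~\ref{rem: imp} (the heat flow does not preserve $Q$), and it blocks the substitution from producing~\eqref{eq: helo}.

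Your case analysis does not repair this. In the regime $td\ge 1$ you still carry $Q(g)=Q(e^{-t\Delta}f)$, which can be arbitrarily large as above. In the regime $td\le 1$ the linear bound $\|e^{-t\Delta}f\|_p\ge (1-C_pQ(f)td)\|f\|_p$ is only useful when $C_pQ(f)td<1$; the hypothesis $td\le 1$ alone does not guarantee this once $Q(f)$ is large, and for $C_pQ(f)td\ge 1$ the right-hand side is nonpositive, so no exponential lower bound of the form $e^{-C_p'Q(f)td}$ can be extracted. In short, the ``equivalence'' between the upper bound for $e^{t\Delta}$ and the lower bound~\eqref{eq: helo} is not the simple change of variables $g=e^{-t\Delta}f$ that your proposal (and the paper's terse remark) suggests; an argument that tracks $Q(f)$ rather than $Q(e^{-t\Delta}f)$ throughout is still missing.
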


\begin{rem}
\label{rem: imp}
It was observed in \cite[Remark 33]{EI1} that Conjecture \ref{conj 1} implies Conjecture \ref{conj 2}. Indeed, since the Hamming cube Laplacian $\Delta$ preserves the space $\Deg_{\leq d}$, a repeated application of \eqref{eq: conj1} yields
\[ \|\Delta^k f\|_{L^p(\{-1,1\}^n;X)} \le C(p,X)^k \cdot d^k\, \|f\|_{L^p(\{-1,1\}^n;X)},\qquad k\in \N.\]
From this inequality, we easily obtain via Taylor expansion that
\[
 \|e^{t\Delta} f\|_{L^p(\{-1,1\}^n;X)} \le \exp(C(p,X) \cdot td) \|f\|_{L^p(\{-1,1\}^n;X)}, \, \text{ for all } f\in \Deg_{\le d},
\]
which is equivalent to Conjecture \ref{conj 2} with $c(p,X)=1.$

However, the above argument cannot be applied for concluding that the case $k=1$ of \eqref{eq: main p imp'}  implies Corollary \ref{t: helo}. Iterating the case $k=1$ of \eqref{eq: main p imp'} as a bound between $L^{p+\varepsilon/k}$ and $L^{p+\varepsilon/(k+1)}$ spaces, we may prove that
\[
\|\Delta^k f\|_{L^p(\{-1,1\}^n)} \le \tilde{C}(p,\varepsilon,k)  \cdot d^k\,\|f\|_{L^{p+\varepsilon}(\{-1,1\}^n)},\qquad k\in\N,
\]
for some constant $\tilde{C}(p,\varepsilon,k)$ depending on $p,k,$ and $\varepsilon.$ Unfortunately, it is not obvious whether $\tilde{C}(p,\varepsilon,k) $ has the desired exponential bound in $k$ given by $C(p,\varepsilon)^k$. In summary, in order to deduce Corollary \ref{t: helo} we do need to prove \eqref{eq: main p imp'} for general $k$.
\end{rem}
\begin{rem}
\label{rem: Con21}
One can also prove that Conjecture \ref{conj 2} implies Conjecture \ref{conj 1}. A deep result of Pisier \cite{Pi1} implies that for each $p\in(1,\infty)$ and any $K$-convex Banach space $(X,\|\cdot\|_X)$ the semigroup $\{e^{-t\Delta}\}_{t\ge 0}$ is analytic on $L^p(\{-1,1\}^n;X).$  It is known, see e.g. \cite[Theorem 4.6 c)]{EnNa}, that analyticity implies the estimate
\begin{equation}
\label{eq: holo ineq}
\|t\Delta e^{-t\Delta} f\|_{L^p(\{-1,1\}^n;X)} \le \tilde{C}(p,X)\|f\|_{L^p(\{-1,1\}^n;X)},
\end{equation}
for all $t>0$ and $f\in L^p(\{-1,1\}^n;X)$ with $\tilde{C}(p,X)$ depending only on $p$ and $X.$ Take now $f\in \Deg_{\le d}$ so that $\Delta f\in  \Deg_{\le d}.$ Thus Conjecture \ref{conj 2} (with $t=1/d$ and $\Delta f$ in place of $f$) and \eqref{eq: holo ineq} (with $t=1/d$) imply
\begin{equation*}
\begin{split}
\|\Delta  f\|_{L^p(\{-1,1\}^n;X)} &\le c(p,X)\exp(C(p,X))\|\Delta e^{-\frac1d\Delta} f\|_{L^p(\{-1,1\}^n;X)}\\
&\le c(p,X)\exp(C(p,X)) \tilde{C}(p,X)\cdot d\cdot  \| f\|_{L^p(\{-1,1\}^n;X)},
\end{split}
\end{equation*}
which is \eqref{eq: conj1} with the constant $c(p,X)\exp(C(p,X)) \tilde{C}(p,X).$

\end{rem}

As a further consequence of Theorem \ref{t: main p imp} we obtain a dimension-free Bernstein-Markov type inequality for all functions $f$ that take values in $\{-1,0,1\}.$ Corollary \ref{t: corma} below follows from \eqref{eq: main p imp} (with $k=1$ and $\varepsilon=(p-1)/2$) together with the observation that, for such functions $f$, 
\[\|f\|_{L^{p_{\varepsilon}}(\{-1,1\}^n)}^{p_{\varepsilon}}=\|f\|_{L^2(\{-1,1\}^n)}^2=\|f\|_{L^p(\{-1,1\}^n)}^p.\]

\begin{cor}
\label{t: corma}
Let $p\in (1,\infty)$ and  $d\in [n].$ Take $f\in \Deg_{\le d}$ such that $f\colon \{-1,1\}^n\to\{-1,0,1\}.$ Then we have
\begin{equation}
 \label{eq: corma}   
 \|\Delta f\|_{L^p(\{-1,1\}^n)} \le C_p \cdot d\, \|f\|_{L^p(\{-1,1\}^n)}
\end{equation}
for some constant $C_p$ depending on $p$ but not on the dimension $n.$
\end{cor}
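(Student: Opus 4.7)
The proof is essentially a direct application of Corollary \ref{t: main}, with the quotient $Q(f)$ collapsing to $1$ under the stated range restriction. The plan has only two bookkeeping steps.

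First I would unwind the definition of $Q(f)$ for our specific class of functions. If $f\equiv 0$, then both sides of \eqref{eq: corma} vanish and by convention $Q(f)=1$, so there is nothing to prove. Otherwise, since $f\colon \{-1,1\}^n\to \{-1,0,1\}$, every $x\in \supp f$ satisfies $|f(x)|=1$. Therefore
\[
M(f) = \max_{x\in \supp f} |f(x)| = 1 = \min_{x\in \supp f}|f(x)| = m(f),
\]
and consequently $Q(f) = M(f)/m(f) = 1$.

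Second, I would invoke Corollary \ref{t: main}, which is already available since $f\in \Deg_{\le d}$: it yields
\[
\|\Delta f\|_{L^p(\{-1,1\}^n)} \le C_p \cdot Q(f) \cdot d \, \|f\|_{L^p(\{-1,1\}^n)}.
\]
Substituting $Q(f)=1$ from the previous step gives exactly \eqref{eq: corma}, with the same constant $C_p$ as in Corollary \ref{t: main} and, crucially, with no dependence on the dimension $n$. There is no real obstacle: the entire content of the corollary lies in observing that the range hypothesis forces $|f|$ to be constant equal to $1$ on its support, which normalizes the quotient.
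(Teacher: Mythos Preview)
Your proof is correct and matches the paper's approach exactly: the paper states that Corollaries \ref{t: corma} and \ref{t: corhelo} are ``straightforward once we observe that $Q(f)=1$ for all functions that take values in $\{-1,0,1\}$,'' which is precisely the computation you carry out before invoking Corollary \ref{t: main}.
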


It is worth noting that Theorem \ref{t: main p imp} cannot be deduced from interpolation between the trivial $L^2$ bound and an $L^{\infty}$ bound. Indeed, case $k=1$ of \eqref{eq: main p imp'}  does not hold at the endpoint $p=p+\varepsilon=\infty.$ The following proposition is contained in \cite{EI1}, see \cite[proof of Theorem 5, p.\ 254]{EI1} and \cite[Remark 24]{EI1}, but we repeat the proof here for the convenience of the reader. 
\begin{prop}[\cite{EI1}]
    \label{lem: count 1}
Given $n\in \N$ and $d\in [n]$, let ${\bf C}(n,d)$ denote the best constant in the inequality
\begin{equation*} 
 \|\Delta f\|_{L^{\infty}(\{-1,1\}^n)} \le {\bf C}(n,d)\cdot \|f\|_{L^{\infty}(\{-1,1\}^n)},\qquad f\in \Deg_{\le d}.
\end{equation*}
Then it holds that $\liminf_{n\to \infty} {\bf C}(n,d) \ge {d^2.}$
\end{prop}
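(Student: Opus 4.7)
The plan is to exhibit, for each $n$, an explicit witness $f_n\in\Deg_{\le d}$ such that the ratio $\|\Delta f_n\|_{L^\infty}/\bigl(Q(f_n)\,\|f_n\|_{L^\infty}\bigr)$ converges to $d^2/8$. Motivated by the elementary observation that $\max_{0<\alpha<1}\alpha(1-\alpha)/(1+\alpha)^2=1/8$, with maximizer $\alpha=1/3$, I would take the perturbed constant
\[
 f_n(x)\;:=\;1+\tfrac{1}{3}\,T_d\!\bigl(S(x)/n\bigr),\qquad S(x):=x_1+\cdots+x_n,
\]
where $T_d$ is the Chebyshev polynomial of the first kind. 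As a polynomial of degree $d$ in $x_1,\dots,x_n$, this function lies in $\Deg_{\le d}$ after the reduction $x_i^2=1$.

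Next I estimate $\|f_n\|_{L^\infty}$ and $Q(f_n)$. Because $|T_d|\le 1$ on $[-1,1]$, we have $f_n\in[2/3,4/3]$, so $f_n>0$ everywhere and $m(f_n)\ge 2/3$. The upper bound $\|f_n\|_{L^\infty}=M(f_n)=4/3$ is attained at $x^\ast=(1,\dots,1)$ via $T_d(1)=1$. For the upper bound on $m(f_n)$, choose $u_d^\ast\in(-1,1)$ with $T_d(u_d^\ast)=-1$, which exists for $d\ge 2$ and is an interior minimum of $T_d$, so $T_d'(u_d^\ast)=0$ and $T_d(u)=-1+O\bigl((u-u_d^\ast)^2\bigr)$. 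Picking $k_n\in\{0,\dots,n\}$ with $|2k_n/n-1-u_d^\ast|\le 1/n$ then forces $m(f_n)\le 2/3+O(1/n^2)$, whence $Q(f_n)=2+O(1/n^2)$. The case $d=1$ is immediate: $f_n=1+S/(3n)$ attains exactly $2/3$ at $(-1,\dots,-1)$.

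For $\|\Delta f_n\|_{L^\infty}$ I use the all-plus-ones vector $x^\ast$. Each coordinate flip sends $S(x)$ from $n$ to $n-2$, so directly from \eqref{eq:pala},
\[
\Delta f_n(x^\ast)\;=\;\frac{n}{6}\bigl(1-T_d(1-2/n)\bigr)\;=\;\frac{d^2}{3}+O(1/n),
\]
where I used $T_d'(1)=d^2$ and Taylor expansion of $T_d$ at $1$. Combining with the previous paragraph,
\[
\frac{\|\Delta f_n\|_{L^\infty}}{Q(f_n)\,\|f_n\|_{L^\infty}}\;\ge\;\frac{d^2/3+O(1/n)}{\bigl(2+O(1/n^2)\bigr)\cdot 4/3}\;\xrightarrow[n\to\infty]{}\;\frac{d^2}{8},
\]
which yields $\liminf_{n\to\infty}{\bf C}(n,d)\ge d^2/8$. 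The only delicate point is the quadratic Taylor decay at $u_d^\ast$: because $T_d$ has a genuine interior minimum there ($T_d'(u_d^\ast)=0$), the deviation $m(f_n)-2/3$ is of order $1/n^2$ rather than $1/n$, and it is precisely this that pins $Q(f_n)$ down to $2$ in the limit and produces the sharp numerical constant $1/8$.
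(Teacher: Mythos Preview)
Your proof is correct and uses the same test function as the paper up to a scalar: the paper takes $T_d(S/n)+3=3f_n$, and since both $Q(\cdot)$ and $\|\Delta\cdot\|_\infty/\|\cdot\|_\infty$ are scale-invariant the two computations are identical. Note, though, that you work harder than necessary: from $f_n\in[2/3,4/3]$ one has $Q(f_n)\le 2$ and $\|f_n\|_\infty\le 4/3$ immediately, and these \emph{upper} bounds alone already give $\mathbf{C}(n,d)\ge \|\Delta f_n\|_\infty/(2\cdot 4/3)=\tfrac{3}{8}\|\Delta f_n\|_\infty\to d^2/8$ --- the precise determination of $m(f_n)$ and the ``delicate'' quadratic Taylor decay at $u_d^\ast$ play no role.
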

\noindent Proposition \ref{lem: count 1} is justified in Section  \ref{sec: pcon}. Its proof is based on  properties of Chebyshev polynomials.

Corollary \ref{t: corma}  also does not hold at the endpoint $p=\infty$. A counterexample is given by the Kushilevitz function; see \cite[Section 6.3]{HKP} and \cite{AW}. This was brought to our attention by Paata Ivanisvili.
\begin{prop}
    \label{lem: count 2}
Let $k\in \NN$ and take  $n=6^k$ and $d=3^k.$ Then there exists a function $f\colon \{-1,1\}^n\to \{-1,1\}$ such that $f\in \Deg_{\le d}$ but
\begin{equation*}  
 \|\Delta f\|_{L^{\infty}(\{-1,1\}^n)} \ge  d^{\log 6/ \log 3 } \|f\|_{L^{\infty}(\{-1,1\}^n)}.
\end{equation*}
\end{prop}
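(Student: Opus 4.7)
The plan rests on a classical observation: for any Boolean-valued $f\colon\{-1,1\}^n\to\{-1,1\}$, the discrete derivative \eqref{eq:pala} satisfies $\partial_j f(\delta) = f(\delta)\cdot \mathbf{1}[f(\delta)\neq f(\delta^{(j)})]$, where $\delta^{(j)}$ denotes $\delta$ with its $j$-th coordinate flipped. Since all nonzero terms share the sign of $f(\delta)$, there is no cancellation in $\Delta f = \sum_j \partial_j f$, and therefore
\[ |\Delta f(\delta)| = s(f,\delta) := |\{j\in[n] : f(\delta)\neq f(\delta^{(j)})\}|,\]
i.e.\ the sensitivity of $f$ at $\delta$. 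Since $\|f\|_{L^\infty}=1$, the proposition reduces to exhibiting a function $f\in\Deg_{\le 3^k}$ on $\{-1,1\}^{6^k}$ taking values in $\{-1,1\}$ whose maximum sensitivity reaches $6^k$.

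For the base case $k=1$, I would invoke the Kushilevitz function $K\colon\{-1,1\}^6\to\{-1,1\}$ (see \cite[Section 6.3]{HKP} and \cite{AW}): it has degree exactly $3$ and admits an input $\delta^\star$ of sensitivity $6$, meaning every single-coordinate flip of $\delta^\star$ flips the value of $K$. I would then iterate by full composition, defining $f_1 = K$ and
\[ f_k(x^{(1)},\dots,x^{(6)}) = K\bigl(f_{k-1}(x^{(1)}),\dots,f_{k-1}(x^{(6)})\bigr),\]
where each $x^{(i)}$ ranges over a fresh block of $6^{k-1}$ variables, so that $f_k\colon\{-1,1\}^{6^k}\to\{-1,1\}$. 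Polynomial substitution multiplies degrees, giving $\deg(f_k) = 3^k = d$. At the diagonal point $\delta^\star_k$ formed by concatenating $6^{k-1}$ copies of $\delta^\star$, an easy induction yields $s(f_k,\delta^\star_k) = 6^k$: flipping any single coordinate flips exactly one inner value $f_{k-1}(x^{(i)})$ (by the inductive hypothesis on $\delta^\star_{k-1}$), which in turn feeds a single-coordinate change into the outer $K$ at the sensitivity-$6$ point $\delta^\star$, and hence flips the overall output.

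Combining the two steps,
\[ \|\Delta f_k\|_{L^\infty(\{-1,1\}^{6^k})} \ge s(f_k,\delta^\star_k) = 6^k = (3^k)^{\log 6/\log 3} = d^{\log 6/\log 3},\]
which is the desired bound. The substantive content of the argument is carried entirely by the Kushilevitz separation in six variables; the main obstacle is verifying the existence of that base-case degree-versus-sensitivity gap, after which the composition step is a matter of routine induction.
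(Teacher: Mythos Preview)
Your approach is essentially the paper's: both iterate the Kushilevitz function by composition and use the identity $|\Delta f(\delta)|=s(f,\delta)$ for $\{-1,1\}$-valued $f$ (the paper cites \cite[Proposition~2.37]{RO} for this, and \cite{AW} for the sensitivity and degree of the iterate).

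One detail in your induction needs tightening. You assert that after flipping a coordinate ``the outer $K$ is at the sensitivity-$6$ point $\delta^\star$,'' but the outer $K$ is actually evaluated at the constant tuple $(c,\ldots,c)$ with $c=f_{k-1}(\delta^\star_{k-1})$; for this to coincide with $\delta^\star$ you need $\delta^\star$ to be a \emph{constant} vector and a fixed point of $K$. The Kushilevitz function does have this property --- e.g.\ $\delta^\star=(1,\ldots,1)$ satisfies $K(1,\ldots,1)=1$ and has all six coordinates sensitive --- but this should be stated, since for a generic sensitivity-$6$ point the diagonal construction of $\delta^\star_k$ would not propagate. Also, composition only gives $\deg(f_k)\le 3^k$, not equality, but the upper bound is all you need here.
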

\noindent  Proposition  \ref{lem: count 2} is proved in Section \ref{sec: pcon}. Note that $\log 6/ \log 3\approx 1.63$, and it would be interesting to see if one can improve the growth to a constant times $ d^2$ as in Proposition \ref{lem: count 1}. {Since we always have the trivial upper bound $\|\Delta f\|_{L^{\infty}(\{-1,1\}^n)} \le  n \|f\|_{L^{\infty}(\{-1,1\}^n)}$ this would require finding a function of degree $d= O(\sqrt{n}),$ whereas the degree of the Kushilevitz function is $d=n^{\log 3/\log6}\ge n^{0.61}.$}  
Note that unlike the $p=\infty$ case, Corollary~\ref{t: corma} does hold when $p=1.$ 
Namely, since $\partial_j f$ takes values in $\{-1,-1/2,0,1/2,1\}$ for $f\colon \{-1,1\}^n\to\{-1,0,1\},$ using Parseval's identity we obtain
\[
\begin{aligned}
    \|\Delta f\|_{L^1(\{-1,1\}^n)} &\leq \sum_{j=1}^{n} \|\partial_j f\|_{L^1(\{-1,1\}^n)}\leq 2\sum_{j=1}^{n} \|\partial_j f\|_{L^2(\{-1,1\}^n)}^2 = 2\sum_{S \subseteq [n]} |S| |\widehat{f}(S)|^2 \\
    & \leq 2d \sum_{S \subseteq [n]} |\widehat{f}(S)|^2 = 2d \| f\|_{L^2(\{-1,1\}^n)}^2 \leq 2d \| f\|_{L^1(\{-1,1\}^n)}.
\end{aligned}
\]

One may also obtain a counterpart of Theorem \ref{t: main p imp} and Corollary \ref{t: helo} on tail spaces. 
Given $d\in [n]$, let 
\[\Tail_{\ge d}=\{f\colon \{-1,1\}^n\to \mathbb C\colon f=\sum_{|S|\ge d }\widehat{f}(S) \,w_S\}	\]
denote the $d$-th tail space.  In other words,  $f\in\Tail_{\ge d}$ means that $\widehat{f}(S)=0$ if $|S|<d.$
The theorem below is a slightly more general variant of \cite[Theorem 1.3]{HMO} by Heilman--Mossel--Oleszkiewicz.

\begin{prop} 

\label{t: hmo imp}

Fix $p\in (1,\infty)$, let $p_{\varepsilon}$ be given by \eqref{eq: peps} and take $\theta=\theta(p,\varepsilon)$ satisfying \eqref{eq: tpe}. Let  $d\in [n]$ and take $f\in \Tail_{\ge d}$. Then, for any $t>0$,
\begin{equation}
 \label{eq: hmo imp}   
 \|e^{-t\Delta}f\|_{L^p(\{-1,1\}^n)} \le \exp(- c(p,\varepsilon)\cdot td )\|f\|_{L^{2}(\{-1,1\}^n)}^{1-\theta} \|f\|_{L^{p_{\varepsilon}}(\{-1,1\}^n)}^{\theta},
\end{equation}
where $c(p,\varepsilon)$ is a constant that depends only on $p$ and $\varepsilon.$
Consequently, for any $p\in[2,\infty)$ and $\varepsilon>0$ we also have
\begin{equation}
 \label{eq: hmo imp'}   
 \|e^{-t\Delta}f\|_{L^p(\{-1,1\}^n)} \le  \exp(- c(p,\varepsilon)\cdot td )\|f\|_{L^{p+\varepsilon}(\{-1,1\}^n)}.
\end{equation}
\end{prop}
\noindent 
Proposition \ref{t: hmo imp} follows easily from H\"older's inequality without the need to use complex interpolation. Thus its proof is easier than that of Theorem \ref{t: main p imp}. In Section \ref{sec: pmain  imp} we present the argument for the sake of completeness.

We note that taking functions 
$f\colon \{-1,1\}^n\to\{-1,0,1\}$ and $\varepsilon=(p-1)/2$ in Proposition \ref{t: hmo imp}, we obtain a result similar to \cite[Theorem 1.3]{HMO}. Furthermore \eqref{eq: hmo imp}   establishes a weaker version of the heat smoothing conjecture of Mendel--Naor \cite[Remark 5.5]{MN1}.

For related work on Gaussian spaces, we refer to \cite{EI3, EI2, Me1}. A noncommutative analogue of the (analytic) heat smoothing conjecture is given in \cite{Zhang}.

\subsection*{Notation} In the remainder of the paper, for $p\in [1,\infty]$ we abbreviate $L^p:=L^p(\{-1,1\}^n)$ and $\|\cdot\|_{L^p}=\|\cdot\|_p.$

\section{Proofs of  Theorem \ref{t: main p imp} and Proposition \ref{t: hmo imp}}

\label{sec: pmain  imp}

We start with the proof of Theorem \ref{t: main p imp}. Note first that \eqref{eq: main p imp} implies \eqref{eq: main p imp'}. Since in \eqref{eq: main p imp'} we take $p\ge 2$, this is clear from the fact that $p_{\varepsilon}=p+\varepsilon$  and the inequality $\|f\|_2\le \|f\|_{p_{\varepsilon}}.$

Inequality \eqref{eq: main p imp} will follow if we justify that
{\begin{equation}
 \label{eq: main p'}   
 \|(\Delta+\gamma I)^k f\|_{p} \le  C(p,\varepsilon)^k  \,(d+\gamma )^k\, \|f\|_{2}^{1-\theta}\|f\|_{p_{\varepsilon}}^{\theta},
\end{equation}}
uniformly in $\gamma>0$. In the remainder of the proof we focus on \eqref{eq: main p'}   and abbreviate \begin{equation}\label{eq_Ldef}
    L=\Delta+\gamma I.
\end{equation} 
Note that the complex powers $L^{z}$ are well defined for $\Rea z\ge 0$ by the spectral theorem; in our case, they are given explicitly by
\begin{equation}
\label{eq: Lzspect}
L^{z}f=\sum_{S\subseteq [n]} (|S|+\gamma)^z\, \widehat{f}(S)\,w_S.
\end{equation}

The operator $L$ in \eqref{eq_Ldef} is clearly self-adjoint on $L^2.$ Moreover, it is not hard to see that $e^{-tL}$ is a contraction semigroup on all $L^p$ spaces, $1<p<\infty.$ Indeed, since $e^{-tL}=e^{-\gamma t} e^{-t\Delta}$ we have the pointwise domination $|e^{-tL}f(x)|\le e^{-t\Delta} |f|(x).$ Furthermore, recalling the definitions \eqref{eq:hala}--\eqref{eq:pala} and denoting $$B f(\delta)= \frac12 \sum_{j=1}^n f(\delta_1,\ldots,-\delta_j,\ldots,\delta_n),$$ we may rewrite
$\Delta f= \frac{n}{2} f -Bf.$
Since $\|Bf\|_p\le \frac{n}{2}\|f\|_p$ for $p\in [1,\infty]$, we obtain
\[
\|e^{-tL} f\|_p\le  \|e^{-t\Delta}f\|_p = e^{-nt/2} \|e^{tB} f\|_p\le e^{-nt/2} e^{nt/2}\|f\|_p=\|f\|_p,
\]
where the last inequality above comes from expanding $e^{tB}$ into a power series and estimating it term by term. In summary, we verified that $L$ generates a symmetric contraction semigroup in the sense of Cowling's \cite{Co}. Hence, we may use  \cite[Corollary 1]{Co}  in order to obtain an estimate for the imaginary powers $L^{iu},$ $u\in \R.$
\begin{prop}
    \label{pro: imp}
    Let $u\in\R.$
    For each $1<p<\infty$ there exists a constant $C_p>1,$ depending only on $p$, such that
    \begin{equation}
\label{eq: imp}
\|L^{iu}f\|_{p} \le C_p \cdot \exp((\pi |1/p-1/2|+1)|u|)\|f\|_p,
    \end{equation}
    for all $f\in L^p$.
\end{prop}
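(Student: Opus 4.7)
The plan is to invoke the preceding discussion — where $L = \Delta + \gamma I$ is shown to generate a symmetric contraction semigroup on $L^p(\{-1,1\}^n)$ for all $1 \le p \le \infty$ — and then apply Cowling's transference-type theorem on imaginary powers of generators of such semigroups, namely \cite[Corollary 1]{Co}. That corollary asserts precisely that, for any nonnegative self-adjoint operator $A$ generating a symmetric contraction semigroup and for any $1 < p < \infty$, the imaginary powers $A^{iu}$ are bounded on $L^p$ with an estimate of the form
\[
\|A^{iu}\|_{L^p\to L^p} \le K_p\,(1+|u|)^{\alpha}\exp\bigl(\pi\,|1/p-1/2|\cdot|u|\bigr)
\]
for some $\alpha \ge 0$ and a constant $K_p$ depending only on $p$. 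Since $L$ is positive (indeed $L \ge \gamma I > 0$) and self-adjoint on $L^2$, its complex powers $L^z$, $\operatorname{Re} z \ge 0$, are well defined by the spectral theorem as in \eqref{eq: Lzspect}, and Cowling's hypothesis is met.

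Once the Cowling bound is in hand, the only cosmetic step is to absorb the polynomial prefactor $(1+|u|)^\alpha$ into an exponential via the trivial estimate $(1+|u|)^\alpha \le C_\alpha\, e^{|u|}$. This yields the claimed estimate
\[
\|L^{iu}f\|_p \le C_p\,\exp\bigl((\pi|1/p-1/2|+1)|u|\bigr)\|f\|_p,
\]
with a constant $C_p$ depending only on $p$, uniformly in $\gamma > 0$ and in the dimension $n$. It is worth emphasizing that the uniformity in $\gamma$ and $n$ comes for free, since Cowling's theorem only requires the abstract property of generating a symmetric contraction semigroup, which is preserved by the translation $\Delta \mapsto \Delta + \gamma I$.

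The main (in fact essentially only) substantive point is the verification, already carried out in the paragraph preceding the proposition, that $\{e^{-tL}\}_{t>0}$ is a symmetric contraction semigroup on $L^p$ for every $p \in [1,\infty]$; this rests on the pointwise domination $|e^{-tL}f| \le e^{-t\Delta}|f|$ and on the decomposition $\Delta = \frac{n}{2}I - B$ with $\|B\|_{L^p \to L^p} \le n/2$. Since that step is already complete, the proof of Proposition \ref{pro: imp} is a direct citation of \cite[Corollary 1]{Co} followed by the elementary absorption of the polynomial factor into the exponential.
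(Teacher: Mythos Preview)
Your proposal is correct and follows precisely the paper's own route: the paper does not give a standalone proof but simply verifies that $L$ generates a symmetric contraction semigroup, cites \cite[Corollary 1]{Co}, and remarks that the stated proposition is ``slightly weaker'' than Cowling's result. Your added explanation of absorbing the polynomial prefactor $(1+|u|)^{\alpha}$ into the extra exponential factor $e^{|u|}$ is exactly what accounts for that ``slightly weaker'' comment.
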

\noindent The above proposition is slightly weaker than \cite[Corollary 1]{Co} but  will suffice for our purposes. {The $L^p$ boundedness of $L^{iu}$ is also a consequence of an earlier result of Stein; see Corollary 3 in \cite[Chapter IV, Section 6, p.\ 121]{topics}, however, no explicit estimate for the norm is given there.}

We are now ready to prove {\eqref{eq: main p'}. 
\begin{proof}[Proof of \eqref{eq: main p'}]
Given $p\in (1,\infty)$, we continue to denote by $p'$ its dual exponent. By duality, it is enough to justify that, for all $\varepsilon>0$,
\begin{equation}
\label{eq:dual}
|\langle L^k f,g\rangle|\leq  C(p,\varepsilon)^k  \,(d+\gamma )^k\, \|f\|_{2}^{1-\theta}\|f\|_{p_{\varepsilon}}^{\theta}
\end{equation}
uniformly for all real-valued functions $f$ and $g$ such that $\|g\|_{p'}=1.$ In what follows, we fix such functions $f$ and $g$. Denote $$q:=p_{\varepsilon}$$ and recall that $\theta=\theta(p,\varepsilon)$ is such that
\begin{equation}
\label{eq: ptq}
   \frac{1}{p}=\frac{\theta}{q}+\frac{1-\theta}{2}.
 \end{equation}

Decompose $g=\sum_{\delta\in \{-1,1\}^n} g(\delta)\ind{\delta}$ and, recalling that $g$ is real-valued, let
\begin{equation}\label{eq_g_zSum}
g_z=\sum_{\delta\in \{-1,1\}^n} \sgn(g(\delta))|g(\delta)|^{p'(1-z)/2+p'z/q'} \ind{\delta},\qquad z\in \CC,    
\end{equation}
where $q'$ is the dual exponent of $q.$ The exact formula for $g_z$ is immaterial and we will only need to use some of its properties. Firstly, note that for fixed $s$ the function $g_z(s)$ is holomorphic in $z$. 
Moreover, by  \eqref{eq: ptq} we have
$1/p'=(1-\theta)/2+\theta /q',$
so 
\begin{equation}
\label{eq:gzpro}
g_{\theta}=g,\qquad |g_z|=|g|^{p'/2} \textrm{ if } \Rea z=0,\qquad |g_z|=|g|^{p'/q'} \textrm{ if } \Rea z=1.
\end{equation}
Let $\Sigma=\{z\in \CC\colon 0\le \Rea z \le 1\}$ and set
\begin{equation}
\label{eq: N}
N=\frac{k}{1-\theta}.
\end{equation}
Given $z\in \Sigma$, we define the function
\begin{equation}
\label{eq: vpz}
\varphi(z)=\exp(z^2 N\pi |1/q-1/2|) \langle L^{N(1-z)} f,\overline{g_z}\rangle.
\end{equation}
Since the sum in \eqref{eq_g_zSum}  is finite, the function $\varphi$ is holomorphic in  the interior of $\Sigma.$ It also follows that $|\varphi(z)|$ is bounded in $\Sigma,$ i.e.\ $|\varphi(z)|\le C(N,\gamma,q,p,f,g).$ Thus we may apply Hadamard's three-lines theorem to the function $\varphi$,  provided we appropriately estimate its boundary values at $\Rea z=0$ and $\Rea z=1.$ We claim that
\begin{equation}
\label{eq: goal1}
|\varphi(z)|\le   (d+\gamma)^N \|f\|_2,\qquad \Rea z=0, \text{ and }
\end{equation}
\begin{equation}
\label{eq: goal2}
|\varphi(z)|\le  (C_{p,q})^{k}\|f\|_q,\qquad \Rea z=1,
\end{equation}
for a constant $C_{p,q}>1$ depending only on $p$ and $q.$ 

Assuming the claim for the moment, we apply Hadamard's three-lines theorem \cite[Lemma 1.3.5]{gra} with \[B_0= (d+\gamma)^N \|f\|_2,\qquad B_1=  (C_{p,q})^{k}\|f\|_q,\]
and obtain (recall \eqref{eq: N})
\[
|\varphi(\theta)|\le  (C_{p,q})^{k\theta} (d+\gamma)^k \|f\|_2^{1-\theta} \|f\|_q^{\theta}.
\]
Since $q=p_{\varepsilon}$ the constant $C_{p,q}$  depends only on $p$ and $\varepsilon.$ At this point, to complete the proof of \eqref{eq:dual} we note that \eqref{eq: N}  and \eqref{eq:gzpro} imply 
\[
\langle L^k f,g\rangle=\exp(-\theta^2 N\pi |1/q-1/2|)\varphi(\theta).\]

We are left with proving the claimed inequalities \eqref{eq: goal1} and \eqref{eq: goal2}. We start with a proof of \eqref{eq: goal1}. Take $z=iu,$ $u\in \R$, and note that $\|L^{N(1-iu)} f\|_2\le (d+\gamma)^{N}\|f\|_2.$ Since $f\in \Deg_{\le d},$ this inequality follows from \eqref{eq: Lzspect} and the orthogonality of the Walsh functions $w_S$. We remark that this is the only place where the assumption  $f\in \Deg_{\le d}$ is used.
Now, by the Cauchy--Schwarz inequality and the second identity in \eqref{eq:gzpro}, we obtain
\[
|\varphi(z)|\le \exp(-u^2 N\pi |1/q-1/2|)(d+\gamma)^N\|f\|_2, 
\]
which is even better than the claimed \eqref{eq: goal1}.

Finally, we prove \eqref{eq: goal2}. Take $z=1+iu,$ $u\in \R$, and apply H\"older's inequality with exponents $q$ and $q'$ to the formula \eqref{eq: vpz} defining $\varphi.$ This yields
\[
|\varphi(z)|\le \exp((1-u^2) N\pi |1/q-1/2|)\|L^{-Niu} f\|_q, 
\]
where we used the third identity in  \eqref{eq:gzpro}. Applying Proposition \ref{pro: imp} with $q$ in place of $p$ and using \eqref{eq: N}, we obtain
\begin{align*}
|\varphi(z)|&\le C_q \exp\left[(1-u^2) N\pi |1/q-1/2|+N(\pi|1/q-1/2|+1)|u|\right]\| f\|_q\\
&= C_q (B_{p,q}(u))^k\| f\|_q,
\end{align*}
where $C_q>1$ is a constant that depends only on $q$, and  $$B_{p,q}(u):=\exp\left[\frac{(1-u^2)\pi |1/q-1/2|+(\pi|1/q-1/2|+1)|u|}{1-\theta}\right].$$ Denoting by $B^{\max}_{p,q}$ the maximal value of $B_{p,q}(u)$, we reach
\begin{equation}
|\varphi(z)|\le (C_q B^{\max}_{p,q})^k \| f\|_q.
\end{equation}
This proves that \eqref{eq: goal2} holds with $C_{p,q}=C_q B^{\max}_{p,q}$ and completes the proof of Theorem \ref{t: main p imp}.
\end{proof}
}

{
We now move to the proof of Proposition \ref{t: hmo imp}.
\begin{proof}[Proof of Proposition \ref{t: hmo imp}]
    The argument that \eqref{eq: hmo imp} implies \eqref{eq: hmo imp'} is analogous to the proof that \eqref{eq: main p imp} implies \eqref{eq: main p imp'} given at the start of this section.

    To justify \eqref{eq: hmo imp}, we apply H\"older's inequality and obtain
    \[
\|e^{-t\Delta}f\|_{p}\le \|e^{-t\Delta}f\|_{2}^{1-\theta} \|e^{-t\Delta}f\|_{p_{\varepsilon}}^{\theta}.
    \]
Since $f\in \Tail_{\ge d}$ Parseval's identity yields $\|e^{-t\Delta}f\|_{2}\le \exp(-td)\|f\|_2.$ Thus, using the contractivity of $\exp(-t\Delta)$ on $L^{p_{\varepsilon}}$, we obtain
\[
\|e^{-t\Delta}f\|_{p}\le \exp(-(1-\theta)\cdot td)\|f\|_{2}^{1-\theta} \|f\|_{p_{\varepsilon}}^{\theta}.
\]
This gives \eqref{eq: hmo imp} with $c(p,\varepsilon)=1-\theta(p,\varepsilon)$ and $\theta(p,\varepsilon)$ as defined in \eqref{eq: tpe}.
\end{proof}
}

\section{Counterexamples: proofs of Propositions \ref{lem: count 1} and \ref{lem: count 2}}

\label{sec: pcon}
{We start with the proof of Proposition \ref{lem: count 1}, which is essentially a repetition of the proof of Theorem 5 from \cite{EI1}.

\begin{proof}[Proof of Proposition \ref{lem: count 1}]
Denote by $T_d$ the $d$-th Chebyshev polynomial of the first kind and let  $f$ be the function
\begin{equation}\label{eq_defF+2}
f(\delta)=T_d\left(\frac{\delta_1+\cdots +\delta _n}{n}\right),\qquad \delta \in \{-1,1\}^n.    
\end{equation}
Since $T_d$ is a polynomial of degree $d$ that takes values in  $[-1,1]$ we have $f\in \Deg_{\le d}$ and $\|f\|_{\infty}\le 1$.

Using identities \cite[eqs.\ (89)--(91)]{EI1} with $\varepsilon=(1,\ldots,1)$, we see that
\[
(\Delta f) (1,\ldots,1)=\frac{n}{2}\left(T_d(1)-T_d(1-\frac{2}{n})\right),
\]
and consequently
\[\|\Delta f\|_{\infty}\ge \frac{n}{2}\left(T_d(1)-T_d(1-\frac{2}{n})\right) \ge  \frac{n}{2}\left(T_d(1)-T_d(1-\frac{2}{n})\right)\cdot \|f\|_{\infty}.\]
Since $T_d'(1)=d^2,$ letting $n\to \infty$ we  have $\lim_{n\to \infty} {\bf C}(n,d)\ge d^2$, and the proof is complete. 
\end{proof}
}

\begin{proof}[Proof of Proposition \ref{lem: count 2}]

The proof is a translation of the Kushilevitz function from \cite[Section 6.3]{HKP} into our setting. We need to change the Hamming cube labelled by bits $0$ and $1$ to the one we use and to realize that the translated function has the desired properties. 

Given two boolean functions $\tf\colon \{0,1\}^m\to \{0,1\}$ and $\tg\colon \{0,1\}^n \to \{0,1\}$, we define their composition (which is a function of $mn$ variables) via
\[
(\tf\diamond \tg) (x_{11},\ldots, x_{mn})=\tf(\tg(x_{11},\ldots,x_{1n}),\ldots,\tg(x_{m1},\ldots,x_{mn})).
\]
Let $n=6^k$ and $d=3^k.$ The Kushilevitz function on the set $\{0,1\}^n$ is defined via the $k$-fold composition $\tf_k=\th\diamond \th\diamond \cdots \diamond \th.$ The function $\th$ being composed is
\[
\th(x_1,\ldots,x_6)=\sum_{i=1}^6 x_i -\sum_{{i,j}\in {[6] \choose 2}} x_i x_j + \sum_{\{i,j,k\} \in K} x_i x_j x_k,
\]
where $[6] \choose 2$ denotes the set of all $2$ element subsets of $\{1,\ldots,6\}$ while 
\begin{align*}
K=\{&
\{1,2,5\},\{1,2,6\}, \{1,3,4\}, \{1,3,6\},
\{1,4,5\},\\& \{2,3,4\}, \{2,3,5\}, \{2,4,6\}, \{3,5,6\}, \{4,5,6\}
\}.
\end{align*}
A straightforward but somewhat cumbersome  case distinction reveals that the function $\th$ is \{0,1\}-valued.
We refer to \cite[Section 6.3]{HKP} or \cite[Section 3.1]{AW} for further details.

Let $\tf\colon \{0,1\}^n \to \{0,1\}$ be a function. We say that a coordinate (bit) $i$ is {\it sensitive} for $x\in \{0,1\}^n$ if flipping the $i$-th bit results in flipping the output of $\tf$. The sensitivity of $\tf$ on the input $x,$ denoted by $s(\tf,x),$ is the number of bits that are sensitive for $\tf$ on the input $x.$ The sensitivity of $\tf$ is defined as the maximum $s(\tf)=\max_{x\in \{0,1\}^n} s(\tf,x).$ It turns out that $s(\tf_k)=6^k=n$; see \cite[Claim 3.2.1]{AW}. We also have $\deg(\tf_k)=3^k$; see \cite[Claim 3.2.2]{AW}. Here,  $\deg(\tf_k)$ denotes the degree of $\tf_k$ as a multilinear polynomial.

Now, for each $k\in \NN$ define the function $f_k\colon \{-1,1\}^n\to \{-1,1\}$ via
\[
f_k(\delta)=2\tilde f_k \left(\frac{\delta_1+1}{2},\ldots,\frac{\delta_n+1}{2}\right)-1,\qquad \delta \in \{-1,1\}^n.
\]
Defining sensitivity for $f_k$ exactly as for $\tf_k$, it is clear that $s(f_k)=s(\tf_k)=6^k.$  The second formula in \cite[Proposition 2.37]{RO} reveals that $\|\Delta f_k\|_{\infty}=s(f_k)=6^k.$ Since $\deg (f_k)=3^k$, we have $f_k \in \Deg_{\le 3^k}$, and thus
\[
\|\Delta f_k\|_{\infty}=6^k=3^{k\log 6 / \log 3}=d^{\log 6/\log 3}\|f_k\|_{\infty}.
\]
This completes the proof of Proposition \ref{lem: count 2}.
\end{proof}

\subsection*{Acknowledgments} This project grew out of the workshop {\it Analysis on the hypercube with applications to quantum computing} held at the American Institute of Mathematics (AIM) in June 2022. We thank the organizers of the workshop and the staff of AIM for creating a superb environment for collaboration; in particular, we are grateful to Irina Holmes Fay who was a member of our research group. We also thank Paata Ivanisvili and Alexandros Eskenazis for helpful remarks and suggestions. We are grateful to the anonymous referee for helpful remarks that led to significant improvements to the presentation and for pointing out an oversight in one of our previous arguments. 

PD is   supported by the grant NSF  DMS-2154356. OK is supported in part by a grant from the Israel Science Foundation (ISF Grant No. 1774/20), and by a grant from the US-Israel Binational Science Foundation and the US National Science Foundation (BSF-NSF Grant No. 2020643). DOS  is partially supported by FCT/Portugal through CAMGSD, IST-ID, projects UIDB/04459/2020 and UIDP/04459/2020 and
 by IST Santander Start Up Funds.
LS is supported by the Primus research programme PRIMUS/21/SCI/002 of Charles University and by Charles University Research Centre program No.\ UNCE/24/SCI/005.
BW is supported by the National
Science Centre, Poland, grant Sonata Bis 2022/46/E/ST1/ 00036. For the purpose of Open Access, the authors have applied a CC-BY public copyright licence to any
Author Accepted Manuscript (AAM) version arising from this submission.


\normalsize

\end{document}